\newtheorem{thm}{Theorem}[section]
\newtheorem{lem}[thm]{Lemma}
\theoremstyle{definition}
\newtheorem{df}[thm]{Definition}
\theoremstyle{remark}
\numberwithin{equation}{section}
\newcommand{\restrict}{\upharpoonright}
\newcommand{\la}{\langle}\newcommand{\ra}{\rangle}
\newcommand{\mc}{\mathcal}
\newcommand{\inter}{\cap}
\renewcommand{\to}{\rightarrow}
\newcommand{\Iff}{\Leftrightarrow}
\newcommand{\Implies}{\Rightarrow}
\begin{document}
	\title{Low for random reals and positive-measure domination}
	\author{Bj\o rn Kjos-Hanssen\footnote{The author thanks the Institute for Mathematical Sciences of the National University of Singapore for support during preparation of this manuscript at the \emph{Computational Prospects of Infinity} conference in Summer 2005. The author also thanks Denis R. Hirschfeldt for proving upon request a lemma used in an earlier proof of the case $B\le_T 0'$ of Theorem \ref{jada}, and G. Barmpalias, A.E.M. Lewis and M. Soskova for useful communications.}
	}
	\maketitle
	\begin{abstract}
		The low for random reals are characterized topologically, as well as in terms of domination of Turing functionals on a set of positive measure.
	\end{abstract}
	\tableofcontents
	\section{Introduction}

		A function
		$f:\omega\to\omega$ is
		\emph{uniformly almost everywhere (a.e.) dominating}
		if for measure-one many $X$, and all $g$ computable from $X$, $f$ dominates $g$. Such functions were first studied by Kurtz \cite{Kurtz1981} who showed that uniformly a.e.~dominating functions exist and that in fact  $0'$, the Turing degree of the halting problem, computes one of them. If we replace measure by category, there are no such functions, as is not hard to see. A few decades later Dobrinen and Simpson \cite{dob:04} made use of a.e. domination in Reverse Mathematics. They made a couple of fundamental conjectures that were promptly refuted in \cite{BKLS:05} and \cite{CGM}. In this article we strengthen the results of \cite{BKLS:05} to provide a characterization of a related concept, positive-measure domination, in terms of lowness for randomness. Conversely, we characterize low for random reals in terms of such domination. The following characterizations are already known. (We assume the reader is familiar with the definition of Martin-L\"of random reals and of prefix-free Kolmogorov complexity $K$.)

		\begin{thm}[Nies, Hirschfeldt, Stephan, Terwijn \cite{HNS},\cite{AM},\cite{NST}]
		The following are equivalent for $A\in 2^\omega$:
		\begin{itemize}
		 \item $A$ is low for random: {each Martin-L\"of random real is Martin-L\"of random relative to $A$.}
		 \item $A$ is $K$-trivial: $\exists c\forall n\, K(A\restrict n)\le K(\emptyset\restrict n)+c$.
		 \item $A$ is low for $K$: $\exists c\forall n\, K(n)\le K^A(n)+c$.
		 \item $\exists Z\ge_T A$, $Z$ is ML-random relative to $A$.
		 \item $A\le_T 0'$ and $\Omega$ is ML-random relative to $A$
		\end{itemize}
		\end{thm}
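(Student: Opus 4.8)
The plan is to prove the five conditions equivalent by a cycle of implications, isolating two genuinely deep steps and treating the rest as short estimates. Throughout I will lean on the Levin--Schnorr characterization of ML-randomness: $R$ is ML-random iff $K(R\restrict n)\ge n-O(1)$, a statement that relativizes, so $R$ is ML-random relative to $A$ iff $K^A(R\restrict n)\ge n-O(1)$. I will also use freely that relativization cannot increase complexity, $K^A(\sigma)\le K(\sigma)+O(1)$. For definiteness I work with the string form of lowness for $K$, namely $K(\sigma)\le^+ K^A(\sigma)$ for all strings $\sigma$; reconciling it with the displayed number form is a routine preliminary I would dispatch first.

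I would begin with the routine implications. For low for $K$ $\Rightarrow$ $K$-trivial, chain $K(A\restrict n)\le^+ K^A(A\restrict n)\le^+ K^A(n)\le^+ K(n)$, where the middle step holds because $A\restrict n$ is computed from $n$ with oracle $A$, and note $K(n)=K(\emptyset\restrict n)+O(1)$. For low for $K$ $\Rightarrow$ low for random, if some ML-random $R$ failed to be $A$-random then $K^A(R\restrict n)\le n-c$ infinitely often for each $c$, whence $K(R\restrict n)\le^+ K^A(R\restrict n)\le n-c+O(1)$ infinitely often, contradicting Levin--Schnorr for $R$. For low for random $\Rightarrow$ base condition, I would invoke the Ku\v{c}era--G\'acs theorem to obtain an ML-random $Z$ with $A\le_T Z$; since $A$ is low for random this $Z$ is automatically ML-random relative to $A$, so $Z$ witnesses the base condition. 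Finally $(5)\Rightarrow(4)$ is immediate: $\Omega\equiv_T 0'$, so $A\le_T 0'$ gives $\Omega\ge_T A$, and $\Omega$ is $A$-random by hypothesis, so $Z=\Omega$ works.

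This leaves the deep steps. The central one is $K$-trivial $\Rightarrow$ low for $K$, which I would prove by Nies's golden-run (decanter) construction: one builds a bounded-request set aiming to certify $K(\sigma)\le^+ K^A(\sigma)$ and must bound how much oracle access to $A$ can lower description length, controlling the total weight that can be shed through a nested hierarchy of runs by a cost-function accounting argument. The second deep step, base for randomness $\Rightarrow$ low for $K$, I would prove following Hirschfeldt--Nies--Stephan: from $Z\ge_T A$ with $Z$ $A$-random, a measure/compression estimate shows that $A$ cannot lower complexity much, yielding lowness for $K$. With these in hand the cycle closes: low for random $\Rightarrow$ base $\Rightarrow$ low for $K$ $\Rightarrow$ \{low for random, $K$-trivial\}, and the golden run gives $K$-trivial $\Rightarrow$ low for $K$, so the first four conditions are equivalent; for $(2)\Rightarrow(5)$ I would use Chaitin's counting theorem that every $K$-trivial is $\Delta^0_2$, hence $\le_T 0'$, together with the $A$-randomness of $\Omega$, which now follows from lowness for $K$ via $K^A(\Omega\restrict n)\ge^+ K(\Omega\restrict n)\ge^+ n$.

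I expect the golden-run step to be the main obstacle: it is the one part that is not a short inequality but a full priority-style construction, and getting the decanter bookkeeping right---ensuring that the cumulative weight extracted across all levels of the run hierarchy stays finite---is where the real difficulty lies. By contrast the base-for-randomness step, while also nontrivial, reduces to a cleaner measure-theoretic compression estimate once the framework is in place, and the remaining arrows are purely combinatorial calculations with $K$ and $K^A$.
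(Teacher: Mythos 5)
The first thing to say is that the paper does not prove this theorem at all: it is quoted as known background, with the proofs living in the cited works of Hirschfeldt--Nies--Stephan, Nies, and Nies--Stephan--Terwijn. So there is no internal argument to compare yours against; what can be judged is whether your outline is a correct reconstruction of the literature's proof, and in structure it is. Your routine implications are sound: the chain $K(A\restrict n)\le^+ K^A(A\restrict n)\le^+ K^A(n)\le^+ K(n)=K(\emptyset\restrict n)+O(1)$ correctly gives low for $K$ $\Rightarrow$ $K$-trivial; relativized Levin--Schnorr gives low for $K$ $\Rightarrow$ low for random; Ku\v{c}era--G\'acs plus lowness gives (1)$\Rightarrow$(4); $\Omega\equiv_T 0'$ gives (5)$\Rightarrow$(4); and Chaitin's counting theorem together with the already-established arrow from $K$-triviality to lowness for $K$ gives (2)$\Rightarrow$(5). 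The implication graph you set up ((3)$\Rightarrow$(1), (3)$\Rightarrow$(2), (1)$\Rightarrow$(4), (4)$\Rightarrow$(3), (2)$\Rightarrow$(3), (2)$\Rightarrow$(5), (5)$\Rightarrow$(4)) is strongly connected, so the decomposition does yield all five equivalences, and you attach the two deep arrows to the right sources: $K$-trivial $\Rightarrow$ low for $K$ is Nies's golden-run/decanter argument, and base for randomness $\Rightarrow$ low for $K$ is the Hirschfeldt--Nies--Stephan estimate.

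The caveat is that those two arrows are named rather than proved: as a self-contained proof, your proposal has genuine gaps exactly there. Each of them is a substantial, paper-length construction (the golden run in particular involves the full cost-function and nested-run machinery), and nothing in your sketch would allow a reader to reconstruct either; ``a measure/compression estimate shows that $A$ cannot lower complexity much'' is a statement of the goal, not an argument. That said, this is the same position the paper itself takes---it cites the theorem instead of proving it---so your proposal is best read as a correct and well-organized roadmap of the known proof, with all of the hard content outsourced to the cited results.
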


		The low for random reals induce a $\Sigma^0_3$ nonprincipal ideal in the Turing degrees bounded above by a low$_2$ $\Delta^0_2$ degree \cite{AM}, and have already found application to long-standing open problems in computability theory. Our characterizations in this paper are distinguished by not being couched in the language of randomness and Kolmogorov complexity. They do however refer to measure; it remains open whether a characterization purely in terms of domination or traces can be given such as that found for low for Schnorr random reals \cite{KNS}\cite{TZ}.

		The first main result of Section \ref{2} is Theorem \ref{AB}, which is a characterization of the low for random reals in terms of containment of effectively closed sets of positive measure. Building on this result, Theorem \ref{mainly} is a characterization of low for random reals in terms of positive-measure domination. Section \ref{3} contains, first, a characterization of the Turing degrees relative to which $0'$ is low for random,  in terms of positive-measure domination. Finally, with a view toward future research, we include a proof that there is a Turing functional that is universal for this kind of domination.

	\section{Low for random reals}\label{2}

		To obtain our topological characterization, we will pass first from a certain universal Martin-L\"of test (given in terms of $K$) to an arbitrary Martin-L\"of test, and then to an arbitrary open set of measure $<1$.

		\begin{thm}[Kraft-Chaitin Theorem \cite{Chaitin}]\label{KC}
		Suppose $\la n_k,\sigma_k\ra$, $k\in\omega$ is an $A$-recursive
		sequence, with $\sum_k 2^{-n_k}\le 1$. Then there exists a
		partial $A$-recursive prefix-free machine $M$ and a collection of strings $\tau_k$ with
		$|\tau_k|=n_k$ and $M(\tau_k)=\sigma_k$.
		\end{thm}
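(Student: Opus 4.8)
The plan is to build the machine $M$ incrementally, processing the requests $\la n_k,\sigma_k\ra$ in increasing order of $k$ and allocating to each a code $\tau_k$ of length exactly $n_k$, while keeping the set of codes allocated so far an antichain under the prefix ordering. Since a prefix-free machine is exactly one whose domain is an antichain, this is all that is needed to make $\{\tau_k\}$ serve as the domain of $M$. Throughout the construction I maintain a finite \emph{pool} of pairwise incomparable strings, representing the as-yet-unallocated part of $2^\omega$; initially the pool consists of the single string of length $0$. The structural invariant I impose, and which makes the allocation step work, is that the strings currently in the pool have \emph{pairwise distinct lengths}.

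When the request $\la n_k,\sigma_k\ra$ is read, I first argue that the pool contains a string $\rho$ with $|\rho|\le n_k$. This is the crux of the argument and the only place the Kraft bound is used. By hypothesis $\sum_{j\le k}2^{-n_j}\le 1$, so the measure $\sum_{j<k}2^{-n_j}$ already allocated is at most $1-2^{-n_k}$, whence the pool has measure at least $2^{-n_k}$. If every pool string had length $>n_k$, then by the distinct-length invariant the pool's measure would be a finite sum of distinct terms drawn from $\{2^{-\ell}:\ell>n_k\}$, hence strictly below $\sum_{\ell>n_k}2^{-\ell}=2^{-n_k}$, a contradiction. I then select the \emph{longest} pool string $\rho$ with $|\rho|\le n_k$, set $\tau_k=\rho\,0^{\,n_k-|\rho|}$, remove $\rho$ from the pool, and insert the strings $\rho\,0^{i}1$ for $0\le i<n_k-|\rho|$, whose lengths range over $|\rho|+1,\dots,n_k$.

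Next I verify that the invariant is restored. Because $\rho$ was the longest pool string of length $\le n_k$, no surviving pool string has length in $(|\rho|,n_k]$, so the newly inserted strings, of lengths $|\rho|+1,\dots,n_k$, collide with none of the survivors and with none of each other; thus the pool again has pairwise distinct lengths. The allocated codes remain an antichain because $\tau_k$ extends the removed string $\rho$, while the returned siblings $\rho\,0^{i}1$ are pairwise incomparable, incomparable with $\tau_k$, and incomparable with every surviving pool string. Hence at every stage the allocated codes together with the current pool form an antichain, and in particular $\{\tau_k\}$ is prefix-free.

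Finally I define $M$ by setting $M(\tau)=\sigma_k$ whenever $\tau=\tau_k$. Since the sequence $\la n_k,\sigma_k\ra$ is $A$-recursive and all the bookkeeping above is finite at each stage, the enumeration of pairs $(\tau_k,\sigma_k)$ is $A$-recursive, so $M$ is a partial $A$-recursive prefix-free machine with $|\tau_k|=n_k$ and $M(\tau_k)=\sigma_k$, as required. I expect the main obstacle to be precisely the ``never stuck'' step of the second paragraph, where the Kraft inequality and the distinct-length invariant must be combined to guarantee a pool string of the right size; the remaining steps are careful but routine bookkeeping.
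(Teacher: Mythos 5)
Your proof is correct: the distinct-lengths pool invariant, the ``never stuck'' argument combining the Kraft bound with the fact that a finite sum of distinct powers $2^{-\ell}$, $\ell>n_k$, falls strictly below $2^{-n_k}$, and the antichain bookkeeping are all sound, and effectivity relative to $A$ is immediate. The paper states Theorem \ref{KC} as a cited result of Chaitin and gives no proof of its own, so there is nothing to compare against; your argument is the standard Kraft--Chaitin construction, correctly carried out.
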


		\begin{df}[Chaitin]
		Let $A\in 2^\omega$. An \emph{information content measure relative to $A$} is a partial function $\hat K:2^{<\omega}\to\omega$ such that $$\sum_{\sigma\in 2^{<\omega}}2^{-\hat K(\sigma)}\le 1$$ and $\{\la\sigma,k\ra:\hat K(\sigma)\le k\}$ is r.e. in $A$.
		\end{df}

		\begin{lem}[Chaitin]\label{minimal}
		If $\hat K$ is an information content measure relative to a real $A$, then for all $n$,
		$K^A(n)\le \hat K(n)+\mc O(1)$.
		\end{lem}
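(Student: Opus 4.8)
The plan is to feed an appropriate collection of Kraft--Chaitin requests into Theorem \ref{KC} so as to build a partial $A$-recursive prefix-free machine $M$ that describes each string $\sigma$ by a program of length essentially $\hat K(\sigma)$; the lemma then follows by comparing $M$ against the universal prefix-free oracle machine.

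First I would exploit the hypothesis that $W:=\{\la\sigma,k\ra:\hat K(\sigma)\le k\}$ is r.e.\ in $A$ to fix an $A$-recursive enumeration $\la\sigma_0,k_0\ra,\la\sigma_1,k_1\ra,\dots$ of $W$, and from it produce the $A$-recursive Kraft--Chaitin sequence whose $i$-th axiom is $\la k_i+1,\sigma_i\ra$, i.e.\ a request for a description of $\sigma_i$ of length $k_i+1$. The purpose of the $+1$ shift is to keep the total weight under control: for a fixed $\sigma$, the set $W$ contains exactly the pairs $\la\sigma,k\ra$ with $k\ge\hat K(\sigma)$, so the requests tied to $\sigma$ contribute $\sum_{k\ge\hat K(\sigma)}2^{-(k+1)}=2^{-\hat K(\sigma)}$. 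Summing over $\sigma$ gives total weight $\sum_\sigma 2^{-\hat K(\sigma)}\le 1$, so the Kraft inequality hypothesis of Theorem \ref{KC} is met.

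Applying Theorem \ref{KC} to this sequence then yields a partial $A$-recursive prefix-free machine $M$ together with, for each $i$, a string $\tau_i$ with $|\tau_i|=k_i+1$ and $M(\tau_i)=\sigma_i$. Since the smallest length requested for a given $\sigma$ arises at the pair $\la\sigma,\hat K(\sigma)\ra\in W$, this yields $K^A_M(\sigma)\le\hat K(\sigma)+1$ for every $\sigma$ in the domain of $\hat K$. Finally I would invoke the standard fact that the universal prefix-free oracle machine simulates $M$ up to a constant coding overhead $c_M$, so that $K^A(\sigma)\le K^A_M(\sigma)+c_M\le\hat K(\sigma)+1+c_M$, which is the desired bound once $n$ is identified with its string code.

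The subtlety to watch is that the r.e.-in-$A$ presentation of $\hat K$ delivers only upper bounds $\hat K(\sigma)\le k$ as pairs appear, and gives no effective way to isolate the exact value $\hat K(\sigma)$; one therefore cannot simply issue a single request of length $\hat K(\sigma)$. Issuing a request for every enumerated pair would naively double the total weight, but the $+1$ shift together with the geometric identity $\sum_{k\ge m}2^{-(k+1)}=2^{-m}$ absorbs that factor, and the single extra bit is harmless because it is swallowed by the $\mc O(1)$ term.
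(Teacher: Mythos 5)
Your proof is correct, and it is the standard argument: the paper states Lemma \ref{minimal} without proof (attributing it to Chaitin), and the route you take --- converting the r.e.-in-$A$ relation $\hat K(\sigma)\le k$ into Kraft--Chaitin requests of length $k+1$ so that the geometric identity collapses the total weight to $\sum_\sigma 2^{-\hat K(\sigma)}\le 1$, then applying Theorem \ref{KC} and the universality of the prefix-free oracle machine --- is precisely the intended one, mirroring the paper's own use of Theorem \ref{KC} in the proof of Lemma \ref{singapore}. The only convention worth making explicit is that the $A$-recursive enumeration of $\{\la\sigma,k\ra:\hat K(\sigma)\le k\}$ should be injective (possible, since this set is infinite whenever it is nonempty), as repeated pairs would inflate the weight; with that, your weight computation is exact and the rest of the argument goes through as written.
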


		\begin{df}\label{remind}
		For any real $X$, let $S^A=\{S^A_n\}_{n\in\omega}$ where $S^A_n=\{X:\exists m\,\,K^A(X\restrict m)\le m-n\}$.
		\end{df}

		\begin{lem}\label{singapore}
		If $V^A$ is a Martin-L\"of test relative to $A$, then for each $n$ there exists $p$ such that
		$V^A_p\subseteq S^A_n$.
		\end{lem}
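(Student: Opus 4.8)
The plan is to exhibit $\{S^A_n\}_n$ as a universal Martin-L\"of test relative to $A$ and to show that the given test $V^A$ refines into it after a shift of the index. The mechanism is to hand out short prefix-free $A$-descriptions to the strings generating the $V^A_p$, so that every $X\in V^A_p$ has an initial segment of unusually low $K^A$-complexity, which is exactly the membership condition defining $S^A_n$. The one tool doing the real work is the Kraft-Chaitin Theorem \ref{KC}, together with Chaitin's minimality Lemma \ref{minimal}.

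Concretely, first I would fix, uniformly in $p$, a prefix-free $A$-r.e.\ set $U_p$ of strings with $V^A_p=\Union_{\sigma\in U_p}[\sigma]$ and $\sum_{\sigma\in U_p}2^{-|\sigma|}=\mu(V^A_p)\le 2^{-p}$; for instance the antichain of $\prec$-minimal strings $\sigma$ with $[\sigma]\subseteq V^A_p$ works. A first useful observation is that each $\sigma\in U_p$ satisfies $2^{-|\sigma|}\le\mu(V^A_p)\le 2^{-p}$, hence $|\sigma|\ge p$, so the lengths I am about to request are nonnegative. Next I would enumerate, over all $p$ and all $\sigma\in U_p$, the Kraft-Chaitin requests $\la |\sigma|-f(p),\sigma\ra$, where $f:\omega\to\omega$ is a weight function with $f(p)\to\infty$ to be chosen. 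The total requested weight is $\sum_p\sum_{\sigma\in U_p}2^{-(|\sigma|-f(p))}=\sum_p 2^{f(p)}\mu(V^A_p)\le\sum_p 2^{f(p)-p}$, so any $f$ with $f(p)\to\infty$ and $\sum_p 2^{f(p)-p}\le 1$ (for instance $f(p)=p-2\lceil\log_2(p+2)\rceil$) keeps the hypothesis $\sum_k 2^{-n_k}\le 1$ of Theorem \ref{KC} satisfied.

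Feeding these requests to Theorem \ref{KC} yields a single partial $A$-recursive prefix-free machine $M$ with, for every $p$ and $\sigma\in U_p$, a code $\tau$ of length $|\sigma|-f(p)$ and $M(\tau)=\sigma$; thus $\hat K:=K^A_M$ is an information content measure relative to $A$ with $\hat K(\sigma)\le|\sigma|-f(p)$ on each $U_p$. Lemma \ref{minimal} then supplies a single constant $c$ with $K^A(\sigma)\le\hat K(\sigma)+c\le|\sigma|-f(p)+c$ for all such $\sigma$. Finally, given $n$, I would choose $p$ so large that $f(p)-c\ge n$; for any $X\in V^A_p$ there is $\sigma\in U_p$ with $\sigma\prec X$, and writing $m=|\sigma|$ gives $K^A(X\restrict m)\le m-(f(p)-c)\le m-n$, i.e.\ $X\in S^A_n$, so $V^A_p\subseteq S^A_n$.

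The step I expect to be the crux is arranging the additive constant $c$ to be independent of $n$: this is why I build one machine $M$ serving all levels $p$ at once rather than a separate machine per $n$, and why the weight function $f$ is allowed to grow slowly, so that the single convergence $\sum_p 2^{f(p)-p}\le 1$ absorbs every level simultaneously. Once $c$ is uniform, choosing $p=p(n)$ to beat $n+c$ is immediate, and the nonnegativity observation $|\sigma|\ge p$ is what guarantees the requests are legitimate.
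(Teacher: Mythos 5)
Your proof is correct and follows essentially the same route as the paper's: both feed Kraft--Chaitin requests of length $|\sigma|-f(p)$ for prefix-free generators of the test components into Theorem \ref{KC} (the paper fixes $p=2m$ with discount $m-1$, so the weight bound $\sum_m 2^{m-1}2^{-2m}\le 1$ is immediate), then apply Lemma \ref{minimal} to obtain a single constant $c$ and absorb it by shifting the index. One minor caveat: your parenthetical choice of $U_p$ as the antichain of $\prec$-minimal strings with $[\sigma]\subseteq V^A_p$ need not be $A$-r.e.\ (minimality involves a co-r.e.\ condition), but the standard uniformly $A$-r.e.\ prefix-free refinement of the given generators serves the same purpose, and the paper silently assumes exactly this fact as well.
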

		\begin{proof}
		For each $m$, write $V^A_{2m}=\bigcup\{[\sigma_{m,k}]:k\in\omega\}$ where the function $f$ given by $f(m,k)=\sigma_{m,k}$ is computable and
		the sets $\{\sigma_{m,k}:k\ge 1\}$ are prefix-free. Define numbers
		$n_{m,k}=|\sigma_{m,k}|-m+1$ for $m,k\in\omega$. We have

		$$\sum_{m,k} 2^{-|n_{m,k}|}=\sum_m 2^{m-1} \sum_k 2^{-|\sigma_{m,k}|}=\sum_m 2^{m-1}\, \mu V_{2m} \le \sum_m
		2^{m-1} 2^{-2m} = 1.$$

		Hence by Theorem \ref{KC}, we have a partial $A$-recursive prefix-free machine $M$ and strings
		$\tau_{m,k}$ with $|\tau_{m,k}|=n_{m,k}$ and $M(\tau_{m,k})=\sigma_{m,k}$.
		Thus $K_M$, complexity based on the machine $M$, satisfies
		$K_M(\sigma_{m,k})\le n_{m,k}$, and so by Lemma \ref{minimal}, there is a constant $c$
		such that $K^A(\sigma_{m,k})\le n_{m,k}+c=|\sigma_{m,k}|-m+1+c$. This means that
		$V^A_{2m}\subseteq S^A_{m-c-1}$ for each $m$.

		Thus, given $n$, let $m=n+c+1$ and $p=2m$. Then $V^A_p=V^A_{2m}\subseteq
		S^A_{m-c-1}=S^A_n$, as desired.
		\end{proof}

		Schnorr \cite{241} showed that $S^A$ is a universal Martin-L\"of test relative to $A$.

		\begin{df}
		Let $n\ge 1$. Let $\Sigma^\mu_n$ denote the collection of all $\Sigma^0_n$
		classes of measure $<1$. The complement of a $\Sigma^\mu_n$ class is a $\Pi^\mu_n$ class. The complement of $U$ is denoted $\overline U$.
		The clopen subset of $2^\omega$ generated by $\sigma\in
		2^{<\omega}$ is denoted $[\sigma]$, and concatenation of strings
		is denoted by juxtaposition.

		If $U$, $V$ are open subsets of $2^\omega$ given by $U=\bigcup\{[\sigma]:\sigma\in \hat U\}$ and $V=\bigcup\{[\sigma]:\sigma\in \hat V\}$, where $U$ and $V$ are prefix-free sets of strings, then we define
		$$UV=\bigcup\{[\sigma\tau]:\sigma\in\hat U, \,\tau\in\hat V\}.$$
		This product depends on $\hat U$ and $\hat V$, not just on $U$ and $V$, so when considering a $\Sigma^0_1$ class $U$, we implicitly fix a suitable recursively enumerable set $\hat U$ for $U$.
		We define $U^n=U^{n-1}U$ where $U^1=U$. We can also think of this exponentiation as acting on a closed set $Q$, defining $Q^n$ via the equation $\overline{Q^n}=\overline Q^n$. It will be clear whether we are considering a set as open or closed.
		\end{df}
		\begin{lem}[Ku\v{c}era \cite{kucera}]\label{tonda}
		For each $\Pi^\mu_1(A)$ class $Q$ there is a computable function $f$ such that $\{\overline {Q^{f(n)}}\}_{n\in\omega}$, is a Martin-L\"of test relative to $A$.
		\end{lem}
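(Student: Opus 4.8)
The plan is to reduce everything to the single fact that the product operation on open sets multiplies measure, provided the generating sets of strings are prefix-free. Write $Q$ as the complement of a $\Sigma^0_1(A)$ class $U$ of measure $<1$, and fix a prefix-free set of generators $\hat U$ that is r.e.\ in $A$; every open class admits such generators, obtained uniformly by thinning the given enumeration to an antichain. By the defining equation $\overline{Q^m}=\overline{Q}^m=U^m$, so it suffices to control the measures $\mu(U^m)$ and to check that the classes $U^m$ are uniformly $\Sigma^0_1(A)$.

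First I would establish $\mu(U^m)=(\mu U)^m$. The generators of $U^m$ are the concatenations $\sigma_1\sigma_2\cdots\sigma_m$ with each $\sigma_i\in\hat U$. Because $\hat U$ is prefix-free, distinct such concatenations generate disjoint cylinders, and $\mu[\sigma_1\cdots\sigma_m]=\prod_i 2^{-|\sigma_i|}$; summing the resulting product gives $\mu(U^m)=\bigl(\sum_{\sigma\in\hat U}2^{-|\sigma|}\bigr)^m=(\mu U)^m$. This is the only calculation of substance, and the prefix-freeness of $\hat U$ is exactly what makes it go through. Uniformity is then immediate: the set of concatenations is r.e.\ in $A$ and prefix-free uniformly in $m$, so $\{U^m\}_m$ is a uniformly $\Sigma^0_1(A)$ sequence.

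It remains to choose $f$. Since $\mu U<1$, fix once and for all a rational $q$ with $\mu U\le q<1$. This is a nonuniform choice, but the lemma only asserts the existence of $f$, so no computation of $\mu U$ from $A$ is required. Define $f(n)$ to be the least $m$ with $q^m\le 2^{-n}$; since $q$ is a fixed rational, this condition is decidable and hence $f$ is computable, with $f(n)=\lceil n/\log_2(1/q)\rceil$ up to rounding. Then $\mu(\overline{Q^{f(n)}})=\mu(U^{f(n)})=(\mu U)^{f(n)}\le q^{f(n)}\le 2^{-n}$, and combined with the uniformity just noted this shows $\{\overline{Q^{f(n)}}\}_n$ is a Martin-L\"of test relative to $A$.

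The main obstacle is conceptual rather than computational: one must recognize that the bound $q$ on $\mu U$ may be taken as a nonuniform constant, so that $f$ is genuinely computable even though $\mu U$ need not be $A$-computable. It is precisely here that a naive attempt would go astray by trying to compute $\mu U$ from $A$. Once prefix-freeness is in place, the identity $\mu(U^m)=(\mu U)^m$ is the workhorse and the rest is bookkeeping.
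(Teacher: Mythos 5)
Your proof is correct and follows essentially the same route as the paper's: fix a nonuniform rational bound $q$ on the measure, use the identity $\mu(U^m)=(\mu U)^m$ (which the paper invokes without proof but you verify via prefix-freeness of the generators), and take $f$ computable so that $q^{f(n)}\le 2^{-n}$. The extra details you supply --- the disjoint-cylinder calculation, the uniformity of the classes $U^m$ in $m$, and the observation that $q$ need not be computed from $A$ --- are all sound and merely make explicit what the paper leaves implicit.
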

		\begin{proof}
		Let $q>0$ be a rational number such that $\mu Q\ge q$. Let $P=\overline Q$. Then
		$\mu P^n=(\mu P)^n\le (1-q)^n$. Let $f$ be a computable function such that for all $k\in\omega$, $\mu P^{f(k)}\le 2^{-k}$. Let $V^A_k=P^{f(k)}$. Then $V^A$ is a Martin-L\"of test relative to $A$.
		\end{proof}

		\begin{lem}\label{WalMart}
		If $P$ is an open set such that
		$P^n$ is contained in a $\Sigma^\mu_1$ class for some $n\ge 2$, then $P$ itself
		is contained in a $\Sigma^\mu_1$ class.
		\end{lem}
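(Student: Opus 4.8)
The plan is to induct on $n$, the case $n=1$ being trivial since then $P=P^1$ is already contained in the witnessing $\Sigma^\mu_1$ class. Throughout I fix the prefix-free generating set $\hat P$ of minimal strings for $P$, so that $\widehat{P^k}=\{\sigma_1\cdots\sigma_k:\sigma_i\in\hat P\}$ is prefix-free and the cylinders $\{[\rho]:\rho\in\widehat{P^k}\}$ are pairwise disjoint with union $P^k$. Two facts will be used repeatedly. First, for $\tau\in\hat P$ every $\tau\rho$ with $\rho\in\widehat{P^{n-1}}$ lies in $\widehat{P^n}$, so $\tau P^{n-1}\subseteq P^n\subseteq W$ and hence $P^{n-1}\subseteq\tau^{-1}W$, where $\tau^{-1}W=\{y:\tau y\in W\}$ is again a $\Sigma^0_1$ class. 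Second, $\mu(\tau^{-1}W)=2^{|\tau|}\mu(W\inter[\tau])$, which is $<1$ exactly when $\mu(\overline W\inter[\tau])>0$. Note that I deliberately avoid using any effectivity of $P$ itself, since in the intended application $P$ is only $\Sigma^0_1$ relative to an oracle; all effectivity in the conclusion must be extracted from $W$.

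For the inductive step ($n\ge 2$) I would distinguish two cases by the measure of $\overline W\inter P$. If $\mu(\overline W\inter P)>0$, then since $\sum_{\tau\in\hat P}\mu(\overline W\inter[\tau])=\mu(\overline W\inter P)>0$, some $\tau\in\hat P$ has $\mu(\overline W\inter[\tau])>0$. Fixing such a $\tau$, the class $W':=\tau^{-1}W$ is a $\Sigma^\mu_1$ class with $P^{n-1}\subseteq W'$, and applying the inductive hypothesis (or, when $n-1=1$, the trivial base case) to $W'$ produces the required $\Sigma^\mu_1$ class containing $P$. This case strictly decreases the exponent, so the recursion is well-founded.

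The remaining case $\mu(\overline W\inter P)=0$, equivalently $\mu(P\setminus W)=0$, is the heart of the argument: neither a section $\tau^{-1}W$ nor the crude bound $(\mu P)^n\le\mu W<1$ lowers the measure strictly below $1$, so the cover must be manufactured directly. Here I would fix a rational $\alpha$ with $\mu W<\alpha<1$ and set $$V:=\Union\{[\sigma]:\mu(W\inter[\sigma])>\alpha\,2^{-|\sigma|}\}.$$ Since $\mu(W\inter[\sigma])$ is lower-semicomputable uniformly in $\sigma$, the defining condition is $\Sigma^0_1$, so $V$ is a $\Sigma^0_1$ class. For $P\subseteq V$: given $x\in P$, openness of $P$ yields $k$ with $[x\restrict k]\subseteq P$, whence $[x\restrict k]\setminus W\subseteq P\setminus W$ is null and $\mu(W\inter[x\restrict k])=2^{-k}>\alpha\,2^{-k}$, so $[x\restrict k]\subseteq V$. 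For the measure bound, write $V$ as the disjoint union of its minimal generators $\sigma$; each satisfies $\mu(\overline W\inter[\sigma])<(1-\alpha)2^{-|\sigma|}$, and summing gives $\mu(\overline W\inter V)<(1-\alpha)\mu V$. Combining this with $\mu(V\inter W)\le\mu W$ gives $\mu V<\mu W+(1-\alpha)\mu V$, that is $\alpha\,\mu V<\mu W$, so $\mu V<\mu W/\alpha<1$. Thus $V$ is the desired $\Sigma^\mu_1$ class.

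I expect the main obstacle to be precisely this last case, where $\overline W$ meets $P$ only in a set of measure zero: one cannot appeal to a small section or to the product measure identity, and must instead build the cover from the cylinders on which $W$ has density exceeding $\alpha$, then verify via the $\alpha$-threshold that the resulting effective open set still has measure below $1$. The only routine points left to check are that $\mu(W\inter[\sigma])$ is indeed lower-semicomputable uniformly in $\sigma$ (immediate from an r.e. presentation of $W$), and that the product conventions make $\tau P^{n-1}\subseteq P^n$ literally correct for the fixed prefix-free $\hat P$.
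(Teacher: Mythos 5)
Your proof is correct and takes essentially the same approach as the paper's: the same dichotomy (some cylinder $[\tau]$, $\tau\in\hat P$, on which $W$ has relative measure $<1$, versus relative measure $1$ on every such cylinder), the same section construction $\tau^{-1}W$ in the first case, and the same density-threshold cover $V$ with the same summation-over-minimal-generators measure estimate in the second; your induction on $n$ merely repackages the paper's ``reduce to $n=2$ by iteration'' step. A minor bonus: your strict threshold $\mu(W\cap[\sigma])>\alpha\,2^{-|\sigma|}$ makes $V$ manifestly $\Sigma^0_1$, whereas the paper's condition $\mu(U|\sigma)\ge 1-\epsilon$ is stated with a non-strict inequality, which is harmlessly imprecise on that point.
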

		\begin{proof}
		We write $U|\sigma=\bigcup\{[\tau]: [\sigma\tau]\subseteq U\}$.
		Note that if $P$ is open then so is $P^2$. Hence by iteration, it suffices to consider the case $n=2$.
		So suppose $(\exists U)\,\,\,P^2\subseteq U\in\Sigma^\mu_1$. Case 1: $\exists\sigma$, $\mu (U|\sigma)<1$, $\sigma\in\hat P$.
		Then $P^2\cap[\sigma]=[\sigma]P$, the product of $[\sigma]$ and $P$.
		Then $P=([\sigma]P)|\sigma=(P^2\inter[\sigma])|\sigma=P^2|\sigma\subseteq U|\sigma\in\Sigma^\mu_1$.
		Case 2: Otherwise; so $\hat P\subseteq\left\{\sigma:\mu(U|\sigma)=1\right\}$. Fix a rational number $\epsilon>0$ such that $\mu U<1-\epsilon$, and let
		$V=\bigcup\left\{[\sigma]: \mu (U|\sigma)\ge 1-\epsilon \right\}$. Note that $V$ is $\Sigma^0_1$, contains $P$, and $\mu V<1$ because $(1-\epsilon)\mu V\le \mu U<1-\epsilon$.
		\end{proof}

		As usual, an $A$-random is a real that is Martin-L\"of random relative to $A$. If $A, B\in 2^\omega$ then $A$ is a \emph{tail} of $B$ if there exists $n$ such that $A(k)=B(n+k)$ for all $k\in\omega$.

		\begin{lem}[Ku\v{c}era \cite{kucera}]\label{antonin}
		For each $A\in 2^\omega$, each $\Pi^\mu_1(A)$ class contains a tail of each $A$-random real.
		\end{lem}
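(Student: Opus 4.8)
The plan is to argue by contraposition, showing that any real with \emph{no} tail in the given $\Pi^\mu_1(A)$ class $Q$ must fail a Martin-L\"of test relative to $A$, hence cannot be $A$-random. First I would set $P=\overline Q$, an open set of measure $\mu P<1$ (since $\mu Q>0$), and invoke Lemma \ref{tonda}: there is a computable $f$ such that $\{\overline{Q^{f(n)}}\}_n=\{P^{f(n)}\}_n$ is a Martin-L\"of test relative to $A$, using that $\overline{Q^n}=P^n$ and $\mu P^n=(\mu P)^n\to 0$.

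The core of the argument is a peeling (block-decomposition) step. Suppose $R$ is such that no tail of $R$ lies in $Q$; equivalently, every tail $R_{[n]}$ (defined by $R_{[n]}(k)=R(n+k)$) lies in the open set $P$. I would then build a decomposition $R=\sigma_1\sigma_2\sigma_3\cdots$ with each $\sigma_i\in\hat P$: since $R=R_{[0]}\in P$, it has a prefix $\sigma_1\in\hat P$; having peeled off $\sigma_1\cdots\sigma_i$ and set $m=|\sigma_1|+\cdots+|\sigma_i|$, the remaining tail $R_{[m]}$ is again in $P$ by hypothesis, so it has a prefix $\sigma_{i+1}\in\hat P$. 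Since $\sigma_1\cdots\sigma_k$ is then a genuine prefix of $R$ with each block in $\hat P$, the definition of the product gives $R\in P^k$, and this holds for every $k$.

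Combining the two parts, $R\in\Inter_k P^k\subseteq\Inter_n P^{f(n)}=\Inter_n\overline{Q^{f(n)}}$, so $R$ fails the Martin-L\"of test of Lemma \ref{tonda} and is not $A$-random. Taking the contrapositive yields the lemma.

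I expect the main obstacle to be making the peeling step fully rigorous rather than any measure-theoretic estimate: I must check that at each stage a fresh $\hat P$-block can be peeled off the current tail and that the concatenated blocks really do witness membership $R\in P^k$ under the given (prefix-freeness--dependent) definition of the product. The measure bound $\mu P^n=(\mu P)^n$ and the test property are already packaged in Lemma \ref{tonda}, so beyond the decomposition the remaining work is routine bookkeeping.
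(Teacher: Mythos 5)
Your proof is correct and is essentially the paper's own argument in contrapositive form: both rest on the Martin-L\"of test $\{P^{f(n)}\}_{n\in\omega}$ from Lemma \ref{tonda} together with the block-decomposition linking membership in $P^k$ to all tails lying in $P$ (equivalently, no tail lying in $Q$). Your explicit peeling step merely spells out what the paper compresses into ``clearly, as $Q$ is closed \dots\ by iteration.''
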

		\begin{proof}
		Let $Q$ be a $\Pi^\mu_1(A)$ class and suppose $X$ is $A$-random. Then by Lemma \ref{tonda}, there is an $m$ such that $X\in Q^m$. If $m=2$ then clearly, as $Q$ is closed, some tail of $X$ is an element of $Q$. If $m>2$, the result follows by iteration since each $Q^m$ is closed.
		\end{proof}

		\begin{thm}\label{AB}
		Let $A\in 2^\omega$. The following are equivalent:
		\begin{enumerate}
		\item Each 1-random real is $A$-random ($A$ is \emph{low for random} \cite{AM}).

		\item For each $\Pi^\mu_1$ class $Q$ consisting entirely of $1$-random reals, there exist $\sigma,n$ such that $Q\inter[\sigma]\ne\emptyset$ but $Q\inter S^A_n\inter [\sigma]=\emptyset$.

		\item For some $n$, $\overline{S^A_n}$ has a $\Pi^\mu_1$ subclass.
		\item For each $A$-Martin-L\"of
		test $V_n^A$, there exists an $n$ such that $\overline{V^A_n}$ has a $\Pi^\mu_1$ subclass.
		\item For each $\Pi^\mu_1(A)$ class $Q$ there exists an $n$ such that $Q^n$ has a $\Pi^\mu_1$ subclass.
		\item Each $\Pi^\mu_1(A)$ class has a $\Pi^\mu_1$ subclass.
		\item Some $\Pi^\mu_1(A)$ class consisting entirely of $A$-random reals has a $\Pi^\mu_1$ subclass.
		\item The class of $A$-random reals has a $\Pi^\mu_1$ subclass.
		\end{enumerate}
		\end{thm}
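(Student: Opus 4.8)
The plan is to establish the cycle
$(1)\Rightarrow(2)\Rightarrow(3)\Rightarrow(4)\Rightarrow(5)\Rightarrow(6)\Rightarrow(7)\Rightarrow(8)\Rightarrow(1)$,
with most of the arrows resting on the lemmas already proved. Four of the steps are essentially formal. For $(3)\Rightarrow(4)$ I would use Lemma \ref{singapore}: given an arbitrary $A$-test $V^A$ and the level $n$ witnessing $(3)$, there is a $p$ with $V^A_p\subseteq S^A_n$, so that the $\Pi^\mu_1$ subclass of $\overline{S^A_n}$ also sits inside $\overline{V^A_p}$. For $(4)\Rightarrow(5)$ I would feed the Ku\v{c}era test $\{\overline{Q^{f(n)}}\}$ of Lemma \ref{tonda} into $(4)$, noting that $\overline{V^A_n}=Q^{f(n)}$. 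For $(5)\Rightarrow(6)$ I would apply the root-extraction Lemma \ref{WalMart} to the open set $\overline Q$, turning a $\Pi^\mu_1$ subclass of $Q^n$ into one of $Q$ itself. Finally $(6)\Rightarrow(7)$ follows by instantiating $(6)$ at the class $\overline{S^A_1}$ (which is $\Pi^\mu_1(A)$ and, since $S^A$ is Schnorr's universal $A$-test, consists only of $A$-randoms), and $(7)\Rightarrow(8)$ is just the inclusion of the subclass into the ambient class of $A$-randoms.

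For $(1)\Rightarrow(2)$ I would argue by Baire category inside the compact space $Q$. Since $Q$ is $\Pi^\mu_1$ it is nonempty, and under $(1)$ every member of $Q$ is $A$-random, so $Q\inter\Inter_n S^A_n=\emptyset$. The sets $S^A_n\inter Q$ are relatively open in the Baire space $Q$ and meet in the empty set, so they cannot all be dense; a level $n$ at which $S^A_n\inter Q$ fails to be dense yields a basic clopen $[\sigma]$ with $Q\inter[\sigma]\ne\emptyset$ and $Q\inter S^A_n\inter[\sigma]=\emptyset$, which is exactly $(2)$. For $(8)\Rightarrow(1)$ I would take the $\Pi^\mu_1$ subclass $P$ of the $A$-randoms supplied by $(8)$ and apply Ku\v{c}era's Lemma \ref{antonin} with oracle $\emptyset$: $P$ contains a tail of every $1$-random. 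Given a $1$-random $X$, some tail $X'$ lies in $P$ and is therefore $A$-random; since randomness is invariant under passing to tails, $X$ is $A$-random as well.

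The crux is $(2)\Rightarrow(3)$, where a measure subtlety is hidden. I would apply $(2)$ to the $\Pi^\mu_1$ class $Q=\overline{S^\emptyset_1}$, which consists entirely of $1$-randoms, obtaining $\sigma,n$ with $\emptyset\ne Q\inter[\sigma]\subseteq\overline{S^A_n}$. The set $Q\inter[\sigma]$ is $\Pi^0_1$ and nonempty, but to qualify as a $\Pi^\mu_1$ subclass it must have positive measure, which $(2)$ does not directly deliver. The point that closes the gap is the sublemma that a nonempty measure-zero $\Pi^0_1$ class can be covered by a Martin-L\"of test, and hence contains only non-random reals: approximating the class from above by clopen sets whose measures tend to $0$ produces the test. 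Since $Q\inter[\sigma]$ is nonempty and consists of $1$-randoms, it cannot be null, so it is the required $\Pi^\mu_1$ subclass of $\overline{S^A_n}$.

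I expect this sublemma, together with the Baire-category move in $(1)\Rightarrow(2)$, to be the only genuinely non-formal ingredients; everything else is bookkeeping built on Lemmas \ref{singapore}, \ref{tonda}, \ref{WalMart}, and \ref{antonin}. Once the null-class sublemma is in hand, the whole cycle closes and all eight conditions are seen to be equivalent.
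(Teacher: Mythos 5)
Your proposal is correct and follows essentially the same route as the paper: the same cycle of implications $(1)\Rightarrow(2)\Rightarrow\cdots\Rightarrow(8)\Rightarrow(1)$, with Lemmas \ref{singapore}, \ref{tonda}, \ref{WalMart}, and \ref{antonin} doing exactly the same work at the same steps. Your Baire-category phrasing of $(1)\Rightarrow(2)$ is just a repackaging of the paper's explicit Nies--Stephan finite-extension construction (which is the proof of the Baire category theorem specialized to this setting), and your null-class sublemma in $(2)\Rightarrow(3)$ is precisely the fact the paper invokes in one line, namely that no 1-random real belongs to a $\Pi^0_1$ class of measure zero.
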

		\begin{proof}
		\noindent (1)$\Implies$(2): For this implication we use an argument of Nies and Stephan \cite{N}. Suppose $A$ is low for random but (2) fails.
		So there is a $\Pi^\mu_1$ class $Q$ consisting entirely of $1$-random reals, such that for all $\sigma,n$,  if $Q\inter[\sigma]\ne\emptyset$ then $Q\inter S^A_n\inter[\sigma]\ne\emptyset$.
		Let $\sigma_0=\lambda$, and $\sigma_{n+1}\succeq \sigma_n$, with $[\sigma_{n+1}]\subseteq S_n^A$ but $[\sigma_{n+1}]\inter Q\ne\emptyset$.
		Then $Y=\bigcup_{n\in\omega} \sigma_n$ is not $A$-random, but is $1$-random, since $Y\in Q$.
		\noindent (2)$\Implies$(3) Let $Q$ be as in (2), and let $n$, $\sigma$ be as guaranteed by (2) for $Q$. Then $Q\inter[\sigma]$ is the desired subclass. It has positive measure because no 1-random belongs to a $\Pi^0_1$ class of measure zero. (3)$\Implies$(4): Lemma \ref{singapore}.
		\noindent (4)$\Implies$(5): Let $Q$ be a $\Pi^\mu_1(A)$ class. By Lemma \ref{tonda}, $V^A_k=\overline {Q^{f(k)}}$ is a Martin-L\"of test relative to $A$ for some computable $f$. By (4), $Q^{f(m)}=\overline{V_m^A}\supseteq F$ for some $F\in\Pi^\mu_1$ and $m$; let $n=f(m)$. (5)$\Implies$(6): Lemma \ref{WalMart}.
		\noindent (6)$\Implies$(7): If $U^A$ is a universal Martin-L\"of test for $A$-randomness then we can let $Q=\overline {U_1}$.
		\noindent (7)$\Implies$(8): Since any class consisting entirely of $A$-randoms is contained in the class of all $A$-randoms.
		\noindent (8)$\Implies$(1): Suppose $X$ is $1$-random; we need to show $X$ is $A$-random. Let $F$ be a $\Pi^\mu_1$ subclass of the class of $A$-randoms. By Lemma \ref{antonin}, some tail of $X$ is an element of $F$. Hence a tail of $X$ is $A$-random, and thus $X$ itself is $A$-random.
		\end{proof}

		To characterize the low for random reals in terms of domination we first introduce some notation.
		We write Tot($\Phi$)=$\{X:\Phi^X$ is total$\}$ and $\varphi^X(n)=(\mu s)(\forall m<n)(\Phi_s^X(m)\downarrow\le s)$.
		Note that Tot($\Phi$) is a $\Pi^0_2$ class for each $\Phi$, and Tot($\Phi$)=Tot($\varphi$). The function $\varphi$ is the running time of $\Phi$, explicitly satisfying $\Phi^X(n)\le\varphi^X(n)$ for all $n$.
		Let $\Phi$ be a Turing functional and $B\in 2^\omega$.
		If there exists $f\le_T B$ such that for positive-measure many $X$, $\Phi^X$ is dominated by $f$, then we write $\Phi<B$. By $\sigma$-additivity this is equivalent to the statement that there exists $f\le_T B$ such that for positive-measure many $X$, $\Phi^X$ is \emph{majorized} by $f$. We also write $\Phi<B$ in the case that Tot($\Phi$) has measure zero.

		 \begin{lem}[implicit in \cite{dob:04}]\label{following}
		Let $B\in 2^\omega$ and let $\Phi$ be a Turing functional. Then $\varphi<B$ iff Tot($\Phi$) has a $\Pi^\mu_1(B)$ subclass.
		\end{lem}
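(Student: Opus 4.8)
The plan is to prove both directions of the equivalence by translating between the combinatorial statement "$\varphi < B$'' and the topological statement "$\mathrm{Tot}(\Phi)$ has a $\Pi^\mu_1(B)$ subclass.'' The key observation throughout is that $\varphi^X(n)$ is the running time of $\Phi$, so a function $f$ majorizes $\varphi^X$ if and only if $f$ witnesses that all the first $n$ computations $\Phi^X(m)$, $m<n$, have halted by stage $f(n)$.

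\smallskip

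First I would handle the forward direction. Suppose $\varphi < B$, so there is $f \le_T B$ such that $\{X : \varphi^X \text{ is majorized by } f\}$ has positive measure; call this set $R$. (The degenerate case where $\mathrm{Tot}(\Phi)$ has measure zero must be addressed separately: there the empty class serves as a $\Pi^\mu_1(B)$ subclass, since a $\Pi^\mu_1$ class is permitted to have measure zero — I would check that the definition of $\Pi^\mu_1$ allows this, as it only requires measure $<1$.) In the main case, note $R \subseteq \mathrm{Tot}(\Phi)$, since being majorized by $f$ forces every $\Phi^X(m)$ to converge. The plan is to extract from $R$ a genuinely closed, $B$-effective subclass. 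For each $n$, the set $\{X : (\forall m<n)\,\Phi^{X}_{f(n)}(m)\!\downarrow\}$ is clopen and $B$-computably enumerable in its code (using $f \le_T B$), and $R$ is the intersection over all $n$ of these clopen sets. Hence $R$ itself is already $\Pi^0_1(B)$, and since $\mu R > 0$ we have $\mu R < 1$ only if we shrink slightly; if $\mu R = 1$ I would pass to a clopen-bounded subclass or simply note that a positive-measure $\Pi^0_1(B)$ class of measure $<1$ can be carved out by removing one basic clopen neighbourhood disjoint from a point not in $R$. This gives the desired $\Pi^\mu_1(B)$ subclass of $\mathrm{Tot}(\Phi)$.

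\smallskip

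For the converse, suppose $Q \subseteq \mathrm{Tot}(\Phi)$ is a $\Pi^\mu_1(B)$ class with $\mu Q > 0$. I want to define $f \le_T B$ majorizing $\varphi^X$ for all $X \in Q$, which suffices since $\mu Q > 0$. The natural definition is
$$f(n) = \max\{\varphi^X(n) : X \in Q\}.$$
The crucial point is that this maximum is finite and $B$-computable. Finiteness follows from compactness: $Q$ is closed (hence compact in $2^\omega$), and since $Q \subseteq \mathrm{Tot}(\Phi)$, for each $X \in Q$ the value $\varphi^X(n)$ is finite, and $\varphi^{(\cdot)}(n)$ is locally constant (it depends only on a finite initial segment of $X$), so by compactness it attains a maximum over $Q$. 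To compute $f(n)$ from $B$: using $B$ one enumerates the complement of $Q$, and I would run all computations $\Phi^\sigma(m)$ for $m<n$ across stages, searching for a stage $s$ such that every $\sigma$ of a given length either enters the enumerated complement of $Q$ or has all of $\Phi^\sigma(m)\!\downarrow$ by stage $s$ for $m<n$. Compactness guarantees such a finite stage exists, and the search is $B$-effective. Setting $f(n)=s$ then majorizes $\varphi^X(n)$ for every $X \in Q$.

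\smallskip

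The main obstacle I anticipate is the $B$-computability of $f$ in the converse direction: the definition $f(n)=\max_{X\in Q}\varphi^X(n)$ is not obviously effective because $Q$ is given as the complement of a $B$-r.e.\ open set, so one only has negative (enumeration) information about membership in $Q$. The compactness argument must be arranged so that the halting of all relevant $\Phi^\sigma(m)$ and the enumeration of $\sigma$'s out of $Q$ together cover every string of the relevant length at some common finite stage; verifying that this dovetailed search always terminates, and that termination is detectable from $B$, is the delicate step. I expect this to mirror the standard fact that a Turing functional total on a $\Pi^0_1(B)$ class has a running time bounded by a $B$-computable function, and I would cite or re-derive that compactness principle explicitly.
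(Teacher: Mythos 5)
Your core argument is correct and is essentially the paper's own proof: in the forward direction the paper exhibits the class $\{X:\forall n\,\Phi^X_{f(n)}(n)\downarrow\}$, which is your intersection of $B$-effectively given clopen sets, and in the converse direction the paper argues exactly as you do --- compactness of the $\Pi^0_1(B)$ class $F$ makes $\{\varphi^X(n):X\in F\}$ finite for each $n$, and the relation $\forall X(X\in F\to\varphi^X(n)<m)$ is $\Sigma^0_1(B)$ (emptiness of a $\Pi^0_1(B)$ class is $B$-effectively detectable by compactness, which is precisely your dovetailed search), whence $\Sigma^0_1(B)$ uniformization yields $f\le_T B$.

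However, you have the definition of $\Pi^\mu_1$ backwards, and this breaks the two side remarks you attach to the forward direction. In the paper, $\Sigma^\mu_1$ is the collection of $\Sigma^0_1$ classes of measure $<1$, and $\Pi^\mu_1$ classes are their \emph{complements}; hence a $\Pi^\mu_1(B)$ class is a $\Pi^0_1(B)$ class of \emph{positive measure}, with no requirement of measure $<1$. So, first, your worry about the case $\mu R=1$ is empty: a measure-one $\Pi^0_1(B)$ class is already $\Pi^\mu_1(B)$, and nothing needs to be carved out (your carving is harmless but unnecessary). Second, and more seriously, your treatment of the degenerate case fails: the empty class has measure zero, so it is \emph{not} a $\Pi^\mu_1(B)$ class; indeed, when Tot($\Phi$) is null it has no $\Pi^\mu_1(B)$ subclass at all, since any such subclass would have positive measure. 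No patch along these lines can work; the paper avoids the issue by reading ``$\varphi<B$'' in the lemma as ``witnessed by some $f\le_T B$ on a positive-measure set'' (its proof opens with ``suppose $\varphi<B$, as witnessed by $f$''), and you should adopt that reading rather than invoke the empty class. A final, minor point: since $\varphi^X(n)=(\mu s)(\forall m<n)(\Phi^X_s(m)\downarrow\le s)$ bounds output values as well as halting times, your converse search should require $\Phi^\sigma_s(m)\downarrow\le s$ rather than mere convergence by stage $s$; otherwise the stage you find can be smaller than $\varphi^X(n)$ (e.g.\ if some $\Phi^X(m)$ halts quickly with a huge output), and your $f$ would not majorize $\varphi^X$. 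The same compactness argument terminates under the stronger requirement; similarly, in the forward direction your $R$ is contained in (not equal to) the displayed intersection of clopen sets, but containment is all you need.
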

		\begin{proof}
		First suppose $\varphi<B$, as witnessed by $f$. Then $\{X:\forall
		n\,\,\Phi^{X}_{f(n)}(n)\downarrow\}$ is a $\Pi^\mu_1(B)$ subclass of Tot($\Phi$).
		Conversely, let $F$ be a $\Pi^\mu_1(B)$ subclass of Tot($\Phi$). By
		compactness, $\{\varphi^{X}(n):X\in F\}$ is finite for each $n$, and $\{\la n,m\ra:\forall X(X\in F\to \varphi^{X}(n)<m\}$ is a $\Sigma^0_1(B)$ class. Hence by $\Sigma^0_1(B)$ uniformization there is a function
		$f\le_T B$ such that $\forall n\forall X(X\in
		F\to \varphi^{X}(n)<f(n))$; i.e., $f$ witnesses that $\varphi<B$.
		\end{proof}

		\begin{thm}\label{mainly}
		Let $A\in 2^\omega$. The following are equivalent:
		\begin{enumerate}
		\item $A$ is low for random.
		\item Each $\Pi^\mu_1(A)$ class has a $\Pi^\mu_1$
		subclass.
		\item (i) $A\le_T 0'$ and (ii) for each $\Phi$, if Tot($\Phi$) has a $\Pi^\mu_1(A)$ subclass then $\varphi<0$.
		\item (i) $A\le_T 0'$, and (ii) for each $\Phi$, if $\varphi<A$ then
		$\varphi<0$.
		\end{enumerate}
		\end{thm}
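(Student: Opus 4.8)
The plan is to anchor everything on Theorem~\ref{AB}, which already gives (1)$\Iff$(2), and to route the remaining conditions through Lemma~\ref{following}, which converts the domination statements into assertions about $\Pi^\mu_1$ subclasses of totality sets. It then suffices to prove (2)$\Implies$(3), (3)$\Iff$(4), and (3)$\Implies$(2); since (3)$\Implies$(2)$\Implies$(1) via Theorem~\ref{AB}, these close the loop and establish the full equivalence.

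First I would dispose of the routine links. By Lemma~\ref{following}, the hypothesis ``$\varphi<A$'' is literally the same as ``Tot($\Phi$) has a $\Pi^\mu_1(A)$ subclass,'' and ``$\varphi<0$'' is the same as ``Tot($\Phi$) has a $\Pi^\mu_1$ subclass'' (taking the oracle to be $\emptyset$). Hence (3)(ii) and (4)(ii) are verbatim the same statement, and since (3)(i) and (4)(i) coincide, we get (3)$\Iff$(4) at once. For (2)$\Implies$(3): clause (i) holds because (2)$\Iff$(1) by Theorem~\ref{AB} and a low for random real satisfies $A\le_T 0'$ by the characterization quoted in the introduction; clause (ii) holds because if Tot($\Phi$) has a $\Pi^\mu_1(A)$ subclass $F$, then (2) supplies a $\Pi^\mu_1$ subclass $G\subseteq F\subseteq$ Tot($\Phi$), and Lemma~\ref{following} converts $G$ into the conclusion $\varphi<0$.

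The substantive direction is (3)$\Implies$(2), and this is exactly where clause (i), $A\le_T 0'$, does the work. Let $Q$ be an arbitrary $\Pi^\mu_1(A)$ class; I want to produce a $\Pi^\mu_1$ subclass. Because $A$ is $\Delta^0_2$, membership in the $\Sigma^0_1(A)$ set of strings defining $\overline Q$ is $\Sigma^0_2$, so $Q$ is itself a $\Pi^0_2$ class (of positive measure, since every $\Pi^\mu_1$ class has positive measure). Every $\Pi^0_2$ class is the totality set of a Turing functional: writing $X\in Q\Iff\forall n\,\exists s\,\theta(X\restrict s,n)$ for a computable predicate $\theta$, set $\Phi^X(n)=\mu s\,\theta(X\restrict s,n)$, so that Tot($\Phi$)$=Q$. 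Now $Q$ is itself a $\Pi^\mu_1(A)$ subclass of Tot($\Phi$), so (3)(ii) gives $\varphi<0$, and Lemma~\ref{following} with oracle $\emptyset$ yields a $\Pi^\mu_1$ subclass of Tot($\Phi$)$=Q$. This is precisely (2).

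The main obstacle is the realization of $Q$ as a totality set, and it is exactly the step that fails when $A\not\le_T 0'$: then $Q$ need not be $\Pi^0_2$ and cannot be presented as Tot($\Phi$) for a genuine Turing functional, which is why clause (i) is indispensable and why the theorem is stated for $\Delta^0_2$ oracles. The only point requiring care is the measure bookkeeping along the chain of implications, namely that the $\Pi^\mu_1(A)$ class $Q$ genuinely has positive measure and that the subclass produced by Lemma~\ref{following} inherits positive measure; both are immediate from the definition of $\Pi^\mu_1$ as the complements of classes of measure $<1$.
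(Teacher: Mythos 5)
Your proposal is correct, and its overall architecture coincides with the paper's: (1)$\Iff$(2) by Theorem \ref{AB}, (2)$\Implies$(3) via Nies's result that low for random implies $A\le_T 0'$ together with Lemma \ref{following}, and (3)$\Iff$(4) read off directly from Lemma \ref{following}. The one place you genuinely diverge is the key implication (3)$\Implies$(2). The paper realizes the given $\Pi^\mu_1(A)$ class $Q$ as a totality set \emph{dynamically}: it writes $Q=\{X:\Psi^{X\oplus A}(0)\uparrow\}$, takes a computable approximation $A=\lim_s A_s$ (limit lemma, using $A\le_T 0'$), and sets $\Phi^X(s)=\mu t>s(\Psi_t^{X\oplus A_t}(0)\uparrow)$, claiming Tot($\Phi$)$=Q$. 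You instead argue \emph{statically}: since $A\le_T 0'$, the $\Sigma^0_1(A)$ complement of $Q$ is $\Sigma^0_2$ by Post's theorem, so $Q$ is an unrelativized $\Pi^0_2$ class, and the $\Pi^0_2$ normal form --- in effect Lemma \ref{refreq} with trivial oracle, a lemma the paper proves but deploys only later in Theorem \ref{jada} --- exhibits $Q$ as Tot($\Phi$) for a genuine Turing functional; then (3)(ii) applied to $Q$ itself, plus Lemma \ref{following} with oracle $0$, finishes exactly as in the paper. Both routes use $A\le_T 0'$ for precisely the same purpose, namely to strip the oracle $A$ out of the presentation of $Q$. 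What your version buys: it avoids having to verify $Q=$Tot($\Phi$) for the stagewise functional, a verification that is sensitive to the choice of approximation --- for a carelessly chosen $A_s$, the computation $\Psi_t^{X\oplus A_t}(0)$ can converge at every large stage $t$ via ever-changing incorrect oracle segments even though $\Psi^{X\oplus A}(0)\uparrow$, so the paper's construction implicitly requires a well-behaved approximation or a hat-trick-style convention; your Post-theorem argument has no such parameter to tune. What the paper's version buys: it is self-contained at that point of the text and keeps the argument in the dynamic, stage-by-stage idiom used elsewhere in the section.
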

		\begin{proof}
		(1)$\Iff$(2) was shown in Theorem \ref{AB}. (2)$\Implies$(3): Nies \cite{AM} shows that if $A$ is low for random then $A\le_T 0'$.
		Suppose Tot($\Phi$) has a $\Pi^\mu_1(A)$ subclass $Q$. By (2), Tot($\Phi$)
		has a $\Pi^\mu_1$ subclass $F$. By Lemma \ref{following},
		we are done.
		(3)$\Implies$(2): Suppose (3) holds and suppose $Q$ is a $\Pi^\mu_1(A)$ class. Pick $\Psi$ such that $Q=\{X:\Psi^{X\oplus A}(0)\uparrow\}$. Since $A\le_T 0'$, $A=\lim_s A_s$, the limit of a computable approximation. Let $\Phi^{X}(s)=\mu t>s(\Psi_t^{X\oplus A_t}(0)\uparrow)$. Then $Q=$Tot($\Phi$). Applying (3) to this $\Phi$,  we have $\varphi<0$ and so by Lemma \ref{following} we are done.
		(3)$\Iff$(4) is immediate from Lemma \ref{following}.
		\end{proof}

	\section{Positive-measure domination}\label{3}

		In \cite{dob:04} it was asked whether the Turing degrees $A$ of uniformly a.e. dominating functions are characterized by either of the inequalities $A\ge 0'$ and $A'\ge_T 0''$. The case $A\ge 0'$ was refuted by a direct construction in \cite{CGM}. The case $A'\ge_T 0''$ was refuted in \cite{BKLS:05} using precursors to the results presented here. Namely, the dual of property 4(ii) above is $\forall\varphi(\varphi<A)$ or equivalently $\forall\varphi(\varphi<0'\to\varphi<A)$. Relativizing our proofs gives that this is equivalent to: $0'$ is low for random relative to $A$. If we restrict ourselves to $A\le_T 0'$, then by \cite{AM} this implies $A'\ge_{tt}0''$, which is strictly stronger than $A'\ge_T 0''$. We do not know whether the assumption $A\le_T 0'$ is necessary for either of the conclusions $A'\ge_{tt}0''$, $A'\ge_T 0''$.

		We say that $A$ is \emph{positive-measure dominating} if for each $\Phi$, $\Phi<A$.
		If each $B$-random real is $A$-random then we write $A\le_{LR}B$ (A is low for random relative to $B$) following \cite{AM}. We write $\Phi^A$ for the functional $X\mapsto \Phi^{A\oplus X}$.

		\begin{lem}\label{refreq}
		Let $A\in 2^\omega$ and $\mc C\subseteq 2^\omega$. Then $\mc C$ is a $\Pi^0_2(A)$ class iff $\mc C$ is Tot($\Phi^A$) for some Turing functional $\Phi$.
		\end{lem}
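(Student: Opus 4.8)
The plan is to prove the two implications separately; the content is a relativized normal-form argument together with the observation, already recorded just before Lemma \ref{following}, that $\text{Tot}(\Phi)$ is a $\Pi^0_2$ class. Throughout I use that, by definition, $\text{Tot}(\Phi^A)=\{X:\Phi^{A\oplus X}\text{ is total}\}$.

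For the easy direction, suppose $\mc C=\text{Tot}(\Phi^A)$ for some Turing functional $\Phi$. Then
$$\mc C=\{X:\forall n\,\exists s\,\,\Phi^{A\oplus X}_s(n)\downarrow\}.$$
For fixed $n$ and $s$, the predicate $\Phi^{A\oplus X}_s(n)\downarrow$ consults only finitely much of $A$ and of $X$, so in the variable $X$ it is a clopen condition decidable relative to $A$. Taking $\exists s$ makes it $\Sigma^0_1(A)$, and taking $\forall n$ then exhibits $\mc C$ as a $\Pi^0_2(A)$ class. This is just the relativization of the remark that $\text{Tot}(\Phi)$ is $\Pi^0_2$.

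For the converse, suppose $\mc C$ is $\Pi^0_2(A)$. I would first put $\mc C$ into normal form $\mc C=\Inter_n U_n$, where $\{U_n\}_{n\in\omega}$ is a uniformly $\Sigma^0_1(A)$ sequence of open sets. Fix a set $W$ that is r.e. in $A$ consisting of pairs $\la n,\sigma\ra$ with $U_n=\Union\{[\sigma]:\la n,\sigma\ra\in W\}$; uniformity guarantees one $A$-enumeration procedure for all $n$ at once. I then define a Turing functional $\Phi$ on the joined oracle $A\oplus X$ as follows: on input $n$, enumerate $W$ using the $A$ part of the oracle, and whenever a pair $\la n,\sigma\ra$ appears, use the $X$ part to test whether $X\in[\sigma]$; halt (with output $0$, say) as soon as such a $\sigma$ is found. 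By construction $\Phi^{A\oplus X}(n)\downarrow$ iff some $\sigma$ with $X\in[\sigma]$ lies in $W$ at level $n$, i.e.\ iff $X\in U_n$. Hence $\Phi^{A\oplus X}$ is total iff $X\in\Inter_n U_n=\mc C$, which is precisely $\mc C=\text{Tot}(\Phi^A)$.

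There is no deep obstacle here; the only points requiring care are bookkeeping. I must ensure that $\Phi$ is a single genuine Turing functional reading the joined oracle $A\oplus X$ — splitting off $A$ for the enumeration of $W$ and $X$ for the initial-segment tests — rather than something that illegitimately treats $A$ as a separate side parameter, and that the $\Pi^0_2(A)$ normal form is applied uniformly so that one index for $W$ serves every $n$ simultaneously. Once the uniformity is pinned down, the equivalence of halting on input $n$ with membership in $U_n$ is immediate, and intersecting over $n$ closes the argument.
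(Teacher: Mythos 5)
Your proof is correct and follows essentially the same route as the paper: the easy direction is the same observation that totality is a $\forall\exists$ condition with an $A$-decidable matrix, and your converse is the same least-witness-search construction, merely phrased with the open-set normal form $\mc C=\Inter_n U_n$ (cylinders enumerated by an $A$-r.e.\ set $W$) where the paper uses the formula normal form $\mc C=\{X:\forall y\exists s\,R(y,s,A,X)\}$ and sets $\Phi^{A\oplus X}(y)=\mu s\,(R(y,s,A,X))$.
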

		\begin{proof}
		Suppose $\mc C$ is a $\Pi^0_2(A)$ class, i.e. $\mc C=\{X:\forall y\exists s R(y,s,A,X)\}$ where $R$ is a formula in the language of second-order arithmetic all of whose quantifiers are first-order and bounded. Then we can let $\Phi^{A\oplus X}(y)=\mu s(R(y,s,A,X))$. Conversely, Tot($\Phi^A)=\{X:\forall y\exists s(\Phi_s^{A\oplus X}(y)\downarrow)\}$.
		\end{proof}

		\begin{thm}\label{jada}
		Let $B\in 2^\omega$. Then $0'$ is low for random relative to $B$ iff $B$ is positive-measure dominating.  \end{thm}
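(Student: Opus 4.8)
The plan is to reduce Theorem \ref{jada} to the relativization of Theorem \ref{AB}, with the base field $\emptyset$ replaced by $B$ and the tested oracle taken to be $0'$. The argument splits into two halves: (Step 1) a ``definitional'' translation showing that $B$ is positive-measure dominating if and only if every $\Pi^\mu_1(0')$ class has a $\Pi^\mu_1(B)$ subclass; and (Step 2) the relativized form of the equivalence (1)$\Iff$(6) of Theorem \ref{AB}, showing that this topological condition holds if and only if each $B$-random is $0'$-random, i.e.\ $0'\le_{LR}B$.

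For Step 1, recall that $B$ positive-measure dominating means $\forall\Phi\,(\Phi<B)$. Since each running time $\varphi$ is itself the function computed by a Turing functional and pointwise bounds the corresponding $\Phi$, applying the hypothesis to the functional $\varphi$ shows this is equivalent to $\forall\Phi\,(\varphi<B)$. By Lemma \ref{following} the latter says precisely that every Tot($\Phi$) of positive measure has a $\Pi^\mu_1(B)$ subclass, the measure-zero case being covered vacuously by the definition of $\Phi<B$. By Lemma \ref{refreq} with oracle $\emptyset$ the classes Tot($\Phi$) are exactly the $\Pi^0_2$ classes; and since $\Pi^0_1(0')=\Pi^0_2$, a $\Pi^0_2$ class of positive measure is the same thing as a $\Pi^\mu_1(0')$ class. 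Threading these identifications together yields the equivalence claimed in Step 1.

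For Step 2 I would relativize the proof of Theorem \ref{AB} verbatim, reading ``$B$-random'' for ``$1$-random'', ``$0'$-random'' for ``$A$-random'', the universal $0'$-test $S^{0'}$ for $S^A$, and $\Pi^\mu_1(B)$ for $\Pi^\mu_1$ in the subclass conclusions. The crucial first link (1)$\Implies$(2) runs unchanged: if the analogue of (2) failed for the $\Pi^\mu_1(B)$ class $Q=\overline{U^B_1}$ of $B$-randoms, one would splice together a $Y\in Q$ lying in every $S^{0'}_n$, that is, a $B$-random that is not $0'$-random, contradicting $0'\le_{LR}B$. The remaining links use Lemma \ref{singapore} and Lemma \ref{tonda} with the \emph{plain} oracle $0'$ (for $0'$-tests and $\Pi^\mu_1(0')$ classes) together with Lemma \ref{WalMart} relativized to $B$; and the converse (6)$\Implies$(1) applies Lemma \ref{antonin} relative to $B$ to a $\Pi^\mu_1(B)$ subclass of $\overline{U^{0'}_1}$, concluding that each $B$-random has a $0'$-random tail and is therefore itself $0'$-random.

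The delicate point, and the one I expect to require the most care, is keeping the two oracles genuinely separate. A naive relativization of ``$A$ low for random'' to base $B$ produces ``each $B$-random is $(0'\oplus B)$-random'', and I must check that nothing in the argument actually needs $(0'\oplus B)$-randomness in place of $0'$-randomness, so that the hypothesis in its stated form $0'\le_{LR}B$ suffices. This succeeds because the hypothesis enters only at (1)$\Implies$(2), where it is used exactly as ``$B$-random $\Rightarrow 0'$-random'', while every downstream step builds $\Pi^\mu_1(B)$ subclasses of classes defined from $0'$ alone; in particular the proof of Lemma \ref{WalMart} shows that the complexity of the covering $\Sigma^\mu_1(B)$ class depends only on the ambient open set $U$ and not on the (here $\Sigma^0_1(0')$) set being covered. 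I would also note that this route handles arbitrary $B$ uniformly, with no need to assume $B\le_T 0'$.
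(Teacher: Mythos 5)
Your overall architecture is the paper's own: your Step 2 is exactly the paper's equivalence (1)$\Iff$(2) (relativization of Theorem \ref{AB}: $0'\le_{LR}B$ iff each $\Pi^\mu_1(0')$ class has a $\Pi^\mu_1(B)$ subclass), and your handling of the two oracles there --- the hypothesis entering only as ``$B$-random $\Implies$ $0'$-random'' at the splicing step, Lemmas \ref{singapore} and \ref{tonda} used with the plain oracle $0'$, Lemmas \ref{WalMart} and \ref{antonin} relativized to $B$ --- is correct and is what the paper means by ``relativization''. The problem is Step 1, where you assert that $\Pi^0_1(0')=\Pi^0_2$, so that a positive-measure $\Pi^0_2$ class ``is the same thing as'' a $\Pi^\mu_1(0')$ class. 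This is false for classes of reals. Post's theorem gives $\Sigma^0_2=\Sigma^0_1(0')$ for subsets of $\omega$, but it does not transfer to subclasses of $2^\omega$: a $\Pi^0_1(0')$ class is topologically closed (it is the complement of a union of basic clopen sets enumerated from $0'$), whereas a $\Pi^0_2$ class such as Tot($\Phi$) is in general only $G_\delta$. For instance $\{X:\forall m\,\exists n>m\;X(n)=1\}$ is a $\Pi^0_2$ class of measure one that is dense but not closed, hence not $\Pi^0_1(C)$ for any oracle $C$ whatsoever. Only one inclusion holds: every $\Pi^0_1(0')$ class is $\Pi^0_2$.

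Consequently one direction of your Step 1 survives: if $B$ is positive-measure dominating and $Q$ is a $\Pi^\mu_1(0')$ class, then $Q$ is a positive-measure $\Pi^0_2$ class, hence $Q=$ Tot($\Phi$) for some $\Phi$ by Lemma \ref{refreq}, hence has a $\Pi^\mu_1(B)$ subclass by Lemma \ref{following}; combined with Step 2 this proves ``dominating $\Implies$ $0'\le_{LR}B$''. But the converse direction collapses: to show $B$ is positive-measure dominating you must handle an arbitrary Tot($\Phi$) of positive measure, and since Tot($\Phi$) need not be closed, no definability identity turns it into a $\Pi^\mu_1(0')$ class to which your hypothesis could be applied. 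What your identification silently assumes is the statement: \emph{every positive-measure $\Pi^0_2$ class has a $\Pi^\mu_1(0')$ subclass}. This is a genuine theorem, not a complexity computation --- it is Kurtz's theorem that $0'$ computes a uniformly a.e.\ dominating function $f$: for each $\Phi$ whose Tot($\Phi$) has positive measure, $f$ majorizes $\varphi^X$ for almost every $X\in$ Tot($\Phi$), i.e.\ $\varphi<0'$, and then Lemma \ref{following} applied with $0'$ in place of $B$ yields the desired $\Pi^\mu_1(0')$ subclass. This is precisely the paper's step (2)$\Iff$(3) in the proof of Theorem \ref{jada} (``$A'$ is uniformly a.e.\ dominating relative to $A$, hence positive-measure dominating relative to $A$''), and it is the one nontrivial ingredient connecting domination to $0'$; with it inserted at this point, your proof becomes the paper's.
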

		\begin{proof}
		This is the special case $A=0$ of the fact that for each $A, B\in 2^\omega$, the following are equivalent:
		\begin{enumerate}
		\item $A'\le_{LR}A\oplus B$.
		\item Each $\Pi^\mu_1(A')$ class has a $\Pi^\mu_1(A\oplus B)$ subclass.
		\item Each $\Pi^\mu_2(A)$ class has a $\Pi^\mu_1(A\oplus B)$ subclass.
		\item $\forall\Phi$, if Tot($\Phi^A$) has positive measure then it has a $\Pi^\mu_1(A\oplus B)$ subclass.
		\item $\forall\Phi(\varphi^A<A\oplus B$)
		\end{enumerate}
		The equivalences are proved as follows. (1)$\Iff$(2): Relativization of Theorem \ref{AB} gives: $A\le_{LR}B$ iff each $\Pi^\mu_1(A)$ class has a $\Pi^\mu_1(B)$ subclass. (3)$\Iff$(4): Lemma \ref{refreq}. (4)$\Iff$(5): Relativization of Lemma \ref{following}. (2)$\Iff$(3): Let $A\in 2^\omega$. $A'$ is uniformly a.e.~dominating relative to $A$, hence $A'$ is positive-measure dominating relative to $A$. Hence by putting $B=A'$ in (3)$\Iff$(5), each $\Pi^\mu_2(A)$ class has a $\Pi^\mu_1(A')$ subclass.

		\end{proof}

		\subsection*{Universal functionals \footnote{The published version of this section contained a mistake, which has here been corrected.}}

		Suppose $\Phi_i$, $i\in\omega$ are all the Turing functionals.
		As observed in \cite{CGM}, the functional $\Psi$ given by $\Psi^{0^i1X}=\Phi_i^X$ is \emph{universal} for uniform a.e. domination, in the sense that any function that dominates $\Psi$ on almost every $X$, is a uniformly a.e. dominating function. As $\Psi<0$, $\Psi$ is not universal for positive-measure domination; however, the following functional is.

		Fix $c\in\omega$. Let $U^Y$ be universal among prefix-free Turing machines with oracle $Y$.

		Then
		\begin{equation}
		\tag{0}(\forall n)(K^{0'}(X\restrict n)\ge n-c)
		\end{equation}
		is equivalent to
		\begin{equation}\tag{1} (\forall n)(\forall\sigma\in 2^{<n-c})( \neg (U^{0'}(\sigma)=X\restrict n)).
		\end{equation}

		Let $\upsilon_t(\sigma)$ be the use of $0'$ in the computation $U^{0'[t]}_t(\sigma)$. (If the latter is undefined then so is the former.)

		This is again equivalent to
		\begin{equation}\tag{2} (\forall n)(\forall\sigma\in 2^{<n-c})(\forall s)(\exists t\ge s)
		\end{equation}
		\begin{equation}
		\notag \neg(U^{0'[t]}_t(\sigma)=X\restrict n \text{ and }0'[t]\restrict \upsilon_t(\sigma)=0'[t-1] \restrict \upsilon_t(\sigma)).
		\end{equation}

		(2)$\Implies$(1): Suppose $\neg$(1), so that actually $U^{0'}(\sigma)=X\restrict n$. Let $s$ be such that $0'$ has stabilized up to the use of $U^{0'}(\sigma)$ by stage $s-1$. Then for all $t\ge s$, $U^{0'[t]}_t(\sigma)=X\restrict n$, and $\neg$(2) follows.

		(1)$\Implies$(2): Suppose $\neg$(2), as witnessed by $n$, $\sigma$, and $s$. Thus, for all $t\ge s$ we have $U^{0'[t]}_t(\sigma)=X\restrict n))$, and $0'$ never changes below the use of $U^{0'}(\sigma)$ after stage $s$. Thus $U^{0'}(\sigma)=X\restrict n$.

		Let $\Xi_c^X(n,\sigma,s)$ be the least stage $t\ge s$ at which $X$ looks like it is 2-random, with constant $c$, in the sense that
		$$|\sigma|< n-c\to \neg(U^{0'[t]}_t(\sigma)=X\restrict n \text{ and }0'[t]\restrict \upsilon_t(\sigma)=0'[t-1] \restrict \upsilon_t(\sigma)).$$
		Then $\Xi_c^X$ is total iff (0) holds. Thus $\Xi_c$ is total for positive-measure many $X$, all of which are 2-randoms.
		 The running time $\xi_c$ of $\Xi_c$ is universal for positive-measure domination in the following sense.

		\begin{thm}
		The class $\{A: A$ is positive-measure dominating$\}$ is $\Sigma^0_3$. In fact, for each $A\in 2^\omega$ and $c\in\omega$, $A$ is positive-measure dominating iff $\xi_c<A$.
		\end{thm}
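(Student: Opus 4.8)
The plan is to establish the stronger ``in fact'' equivalence and then read the $\Sigma^0_3$ bound off its right-hand side, the point being that $\xi_c$ is a \emph{single} fixed functional. The linchpin is an identity implicit in the construction of $\Xi_c$ above: since $\Xi_c^X$ is total iff $(\forall n)\,K^{0'}(X\restrict n)\ge n-c$, we have $\text{Tot}(\Xi_c)=\overline{S^{0'}_{c+1}}$, a level of the universal Martin-L\"of test relative to $0'$, of measure at least $1-2^{-(c+1)}>0$ for every $c$. This positivity for all $c$ (not merely large $c$) is what lets the equivalence hold for each $c$, and it makes $\text{Tot}(\Xi_c)$ a genuine $\Pi^\mu_1(0')$ class rather than a null set to which the measure-zero convention would apply.

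For the forward direction I would use that $\xi_c$, being the running time of $\Xi_c$, is the value function of a Turing functional $\Psi$ (namely $\Psi^X=\xi_c^X$) with $\text{Tot}(\Psi)=\text{Tot}(\Xi_c)$ of positive measure. If $A$ is positive-measure dominating then $\Phi<A$ for every $\Phi$; applying this to $\Psi$ produces an $f\le_T A$ majorizing $\xi_c^X$ on a set of positive measure, which is precisely $\xi_c<A$.

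The converse is the substantive direction, and I would run it as a chain. From $\xi_c<A$, Lemma \ref{following} (with $\Phi=\Xi_c$, whose running time is $\xi_c$, and $B=A$) gives that $\text{Tot}(\Xi_c)=\overline{S^{0'}_{c+1}}$ has a $\Pi^\mu_1(A)$ subclass. This is exactly clause (3) of Theorem \ref{AB} relativized to $0'$, with the target of the subclass taken to be $\Pi^\mu_1(A)$: for some $n$, $\overline{S^{0'}_n}$ has a $\Pi^\mu_1(A)$ subclass. By the relativized implication (3)$\Implies$(6), every $\Pi^\mu_1(0')$ class then has a $\Pi^\mu_1(A)$ subclass; by the relativized equivalence (6)$\Iff$(1) this is $0'\le_{LR}A$; and by Theorem \ref{jada} (its instance with second parameter $A$) this says $A$ is positive-measure dominating. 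Because all eight clauses of Theorem \ref{AB} are equivalent and clause (6) furnishes the specific level $n=c+1$ back again, the forward direction can in fact be recovered by reversing this same chain.

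For the complexity bound I would unwind $\xi_c<A$ as $\exists e\,\big[\Phi^A_e\text{ total}\wedge\mu\{X:(\forall n)\,\xi_c^X(n)\le\Phi^A_e(n)\}>0\big]$. Here ``$\Phi^A_e$ total'' is $\Pi^0_2(A)$; the displayed class is $\Pi^0_1(A)$ uniformly in $e$; and ``$\mu(\cdot)>0$'' of a $\Pi^0_1(A)$ class is $\Sigma^0_2(A)$, as it asserts a positive rational lower bound holding at every clopen stage-approximation. The conjunction is $\Pi^0_2(A)$ and the leading number quantifier makes the predicate $\Sigma^0_3(A)$ uniformly in $A$, so $\{A:\xi_c<A\}$ is $\Sigma^0_3$. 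I expect the genuine obstacle to be conceptual rather than computational: it is the fact that one functional suffices, which rests entirely on universality of the $0'$-test. Positive-measure domination a priori quantifies over all $\Pi^\mu_2$ classes, whereas clause (3) inspects a single level $\overline{S^{0'}_n}$; verifying that $\text{Tot}(\Xi_c)$ really is such a level, of positive measure for every $c$, is where the work concentrates.
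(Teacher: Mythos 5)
Your proof is correct and takes essentially the same route as the paper: Lemma \ref{following} turns $\xi_c<A$ into a $\Pi^\mu_1(A)$ subclass of Tot($\Xi_c$), the relativized Theorem \ref{AB} yields $0'\le_{LR}A$, and Theorem \ref{jada} converts this into positive-measure domination, with the easy direction and the $\Sigma^0_3$ count handled just as you do. The only difference is cosmetic: you enter Theorem \ref{AB} at clause (3) via the exact identity Tot($\Xi_c$)$\,=\overline{S^{0'}_{c+1}}$ (which is correct, since $K^{0'}(X\restrict m)\ge m-c$ iff $K^{0'}(X\restrict m)>m-(c+1)$), whereas the paper enters at clause (7), observing only that Tot($\Xi_c$) is a $\Pi^\mu_1(0')$ class consisting entirely of $0'$-randoms.
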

		\begin{proof}
		Suppose $\xi_c<A$.
		By Lemma \ref{following},
		Tot($\Xi_c$) has a $\Pi^\mu_1(A)$ subclass.

		The complement of Tot($\Xi_c$)
		is $\{X: \exists n\, K^{0'}(X\restrict n)<n-c\}$ which is open. Hence Tot($\Xi_c$) is
		closed and is in fact a $\Pi^\mu_1(0')$ class.

		Thus: Some $\Pi^\mu_1(0')$ class consisting entirely of $0'$-randoms has a $\Pi^\mu_1(A)$ subclass.
		By Theorem \ref{AB} (7) relativized,  $0'\le_{LR}A$, and so by Theorem \ref{jada}, $A$ is positive-measure dominating.
		\end{proof}

	\bibliographystyle{amsplain}
	\bibliography{PAMSwithHyperlinks-arxiv}

\providecommand{\bysame}{\leavevmode\hbox to3em{\hrulefill}\thinspace}
\providecommand{\MR}{\relax\ifhmode\unskip\space\fi MR }
\providecommand{\MRhref}[2]{%
  \href{http://www.ams.org/mathscinet-getitem?mr=#1}{#2}
}
\providecommand{\href}[2]{#2}
\begin{thebibliography}{10}

\bibitem{KNS}
A.~Nies B.~Kjos-Hanssen and F.~Stephan, \emph{Lowness for the class of
  {S}chnorr random reals}, SIAM J. Computing \textbf{35} (2006), no.~3,
  647--657.

\bibitem{BKLS:05}
S.~Binns, B.~Kjos-Hanssen, M.~Lerman, and D.R. Solomon, \emph{On a question of
  {D}obrinen and {S}impson concerning almost everywhere domination}, J.
  Symbolic Logic \textbf{71} (2006), no.~1, 119--136.

\bibitem{Chaitin}
G.J. Chaitin, \emph{A theory of program size formally identical to information
  theory}, J. Assoc. Comput. Mach. \textbf{22} (1975), 329--340.

\bibitem{CGM}
P.~Cholak, N.~Greenberg, and J.S. Miller, \emph{Uniform almost everywhere
  domination}, To appear.

\bibitem{dob:04}
N.L. Dobrinen and S.G. Simpson, \emph{Almost everywhere domination}, J.
  Symbolic Logic \textbf{69} (2004), 914--922.

\bibitem{HNS}
D.R. Hirschfeldt, A.~Nies, and F.~Stephan, \emph{Using random sets as oracles},
  to appear.

\bibitem{kucera}
Anton{\'{\i}}n Ku{\v{c}}era, \emph{Measure, {$\Pi\sp 0\sb 1$}-classes and
  complete extensions of {${\rm PA}$}}, Recursion theory week (Oberwolfach,
  1984), Lecture Notes in Math., vol. 1141, Springer, Berlin, 1985,
  pp.~245--259. \MR{87e:03102}

\bibitem{Kurtz1981}
S.A. Kurtz, \emph{Randomness and genericity in the degrees of unsolvability},
  Ph.D. thesis, University of Illinois at Urbana-Champaign, 1981, pp.~VII+131
  pages.

\bibitem{N}
A.~Nies, \emph{Low for random reals: the story}, Unpublished.

\bibitem{AM}
\bysame, \emph{Lowness properties and randomness}, Adv. Math. \textbf{197}
  (2005), 274--305.

\bibitem{NST}
A.~Nies, F.Stephan, and S.A. Terwijn, \emph{Randomness, relativization and
  turing degrees}, J. Symbolic Logic \textbf{70} (2005), no.~2, 515--535.

\bibitem{241}
C.P. Schnorr, \emph{A unified approach to the definition of a random sequence},
  Mathematical Systems Theory \textbf{5} (1971), 246--258.

\bibitem{TZ}
S.A. Terwijn and D.~Zambella, \emph{Computational randomness and lowness}, J.
  Symbolic Logic \textbf{66} (2001), 1199--1205.

\end{thebibliography}
\end{document}